\newtheorem{theorem}{Theorem}
\newtheorem{lemma}[theorem]{Lemma}
\newtheorem{remark}[theorem]{Remark}
\numberwithin{theorem}{section} 
\numberwithin{equation}{section} 
\newcommand{\addresseshere}{%
  \enddoc@text\let\enddoc@text\relax
}
\begin{document}
\title[Nonexistence of radial optimal functions for the Sobolev inequality on $ \mathbb{M}^n $]{Nonexistence of radial optimal functions for \\ the Sobolev inequality on Cartan-Hadamard manifolds}

\author{Tatsuki Kawakami and Matteo Muratori}

\address{Tatsuki Kawakami, Department of Applied Mathematics and Informatics, Ryukoku University, 1-5 Yokotani, Seta Oe-cho, Otsu, Shiga 520-2194, Japan}
\email{kawakami@math.ryukoku.ac.jp}

\address{Matteo Muratori, Dipartimento di Matematica, Politecnico di Milano, Piazza Leonardo da Vinci 32, 20133 Milano, Italy}
\email{matteo.muratori@polimi.it}


\begin{abstract}
It is well known that the Euclidean Sobolev inequality holds on any Cartan-Hadamard manifold of dimension $ n\ge 3 $, i.e.~any complete, simply connected Riemannian manifold with nonpositive sectional curvature. As a byproduct of the Cartan-Hadamard conjecture, a longstanding problem in the mathematical literature settled only very recently in a breakthrough paper by Ghomi and Spruck \cite{GS}, we can now assert that the optimal constant is also Euclidean, namely~it coincides with the one achieved in the Euclidean space $ \mathbb{R}^n $ by the Aubin-Talenti functions. One may ask whether there exist at all optimal functions on a generic Cartan-Hadamard manifold $ \mathbb{M}^n $. What we prove here, with ad hoc arguments that do not take advantage of the validity of the Cartan-Hadamard conjecture, is that this is false at least for functions that are radially symmetric with respect to the geodesic distance from a fixed pole. More precisely, we show that if the optimum in the Sobolev inequality is achieved by some radial function, then $\mathbb{M}^n $ must be isometric to $ \mathbb{R}^n $.  
\end{abstract}

\maketitle

%


\section{Introduction} 

A Cartan-Hadamard manifold is a complete and simply connected Riemannian manifold $ \mathbb{M}^n $ with everywhere nonpositive sectional curvature. By the Cartan-Hadamard theorem, any such manifold turns out to be topologically equivalent to the Euclidean space $ \mathbb{R}^n $; more precisely, the exponential map centered at any point $ o \in \mathbb{M}^n $ is a diffeomorphism. We refer to Subsection \ref{nfrg} for an account on this and further basic properties of Cartan-Hadamard manifolds. From the functional point of view, a remarkable and by now well-established fact is the validity, on every such manifold of dimension $ n \ge 3 $, of the \emph{Euclidean Sobolev inequality}
\begin{equation}\label{eq-sob-euc}
\left\| f \right\|_{L^{2^\ast}\!(\mathbb{M}^n)} \leq  C \left\| \nabla f \right\|_{L^2(\mathbb{M}^n)} \quad \forall f \in C_{c}^1(\mathbb{M}^n) \, , \qquad 2^\ast := \frac{2n}{n-2} \, ,
\end{equation}
for some positive constant $ C>0 $. Here, with the term \emph{Euclidean}, we simply mean that the exponent appearing in the left-hand side of \eqref{eq-sob-euc} is exactly the same as the one corresponding to the case $ \mathbb{M}^n \equiv \mathbb{R}^n $. It is possible to establish \eqref{eq-sob-euc} through several techniques: see Subsection \ref{kernel} for an explicit proof and for references to other arguments available in the literature. As concerns the value of the \emph{optimal constant}, which will be denoted by $ \mathfrak C $, the situation is more complicated. Indeed, it had been an open question until very recently whether $ \mathfrak C $ coincides with the \emph{Euclidean best constant} $ C_{\mathrm{E}} $, namely the one achieved in $ \mathbb{R}^n $ by the celebrated Aubin-Talenti functions \cite{Aubin2, Talenti}. It is plain, due to the local Euclidean structure of $ \mathbb{M}^n $, that $ \mathfrak C $ cannot be smaller than $ C_{\mathrm{E}} $ (see Subsection \ref{radial-opt}). The fact that $ \mathfrak C = C_{\mathrm{E}} $ was known to be true up to dimension $ n=4 $, as a consequence of the validity of the so-called \emph{Cartan-Hadamard conjecture}, a longstanding problem in geometric analysis. The latter asserts that the \emph{isoperimetric inequality}, or equivalently the $1$-Sobolev inequality
\begin{equation}\label{1-sob}
\left\| f \right\|_{L^{1^\ast}\!(\mathbb{M}^n)} \leq  C_1 \left\| \nabla f \right\|_{L^1(\mathbb{M}^n)} \quad \forall f \in C_{c}^1(\mathbb{M}^n) \, , \qquad 1^\ast := \frac{n}{n-1} \, ,
\end{equation}
holds with \emph{Euclidean best constant} $C_1$ and the optimal functions are characteristic functions of Euclidean balls, i.e.~equality is achieved if and only if $ \mathbb{M}^n \equiv \mathbb{R}^n $ and $ f = \chi_{B_r} $, $r>0$, after a routine extension of \eqref{1-sob} to the $BV$ space. The validity of the Cartan-Hadamard conjecture for $n \ge 5$ was settled only in 2019 by M.~Ghomi and J.~Spruck, in the preprint paper \cite{GS}. Once $ C_1 $ in \eqref{1-sob} can be taken equal to the Euclidean isoperimetric constant, then a Schwarz-type symmetrization technique allows one to show that the same holds for \eqref{eq-sob-euc}, namely $ \mathfrak C = C_{\mathrm{E}} $. We refer the reader to \cite[Section 8]{Hebey} for an overview of the literature and the main techniques used until recently to attack the Cartan-Hadamard conjecture, along with its relation to $p$-Sobolev inequalities. 

\medskip

The aim of the present paper is to give a first contribution to the study of possible \emph{optimal functions}, i.e.~nontrivial functions attaining the identity in \eqref{eq-sob-euc} with $ C = \mathfrak C $. Indeed, regardless of the knowledge of the value of the optimal constant $ \mathfrak C $, it is reasonable to ask whether \eqref{eq-sob-euc} admits at all optimal functions and, in case of positive answer, what is the shape of the latter. We will work in the simplified \emph{radially-symmetric} framework, that is we will consider functions $ f(x) \equiv f(r(x)) $ that depend only on the geodesic distance $ r(x) :=\operatorname{d}(x,o) $ from a fixed pole $ o \in \mathbb{M}^n $, namely \emph{radial functions}. This may appear as a strong restriction, nevertheless radial symmetry has proved to play a major role in the investigation of extremal functions for a wide class of Sobolev-type inequalities. The literature here is huge: without any claim of completeness, in addition to the pioneering papers \cite{Aubin2, Talenti}, we quote \cite{DEL,DELM,DMN} and references therein for a thorough study of symmetry/symmetry-breaking issues in \emph{Caffarelli-Kohn-Nirenberg inequalities}, the latter being functional inequalities of the type of \eqref{eq-sob-euc} (possibly in interpolation form) with respect to power-type weights in $\mathbb{R}^n$. In fact \eqref{eq-sob-euc}, especially when restricted to radial functions, can be seen as a Euclidean \emph{weighted} inequality. See in particular Subsection \ref{lrs} below and \cite{MR}. 

\medskip 

Our main result is the following.
\begin{theorem}\label{teo-nonex}
	Let $ \mathbb{M}^n$ ($ n \ge 3 $) be a Cartan-Hadamard manifold. Suppose that the Sobolev inequality \eqref{eq-sob-euc} admits a (nontrivial) radial optimal function. Then $  \mathbb{M}^n $ is isometric to $ \mathbb{R}^n $.
\end{theorem}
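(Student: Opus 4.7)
The plan is to reformulate the radial Sobolev inequality as a one-dimensional weighted inequality via geodesic polar coordinates, exploit a dilation family of radial test functions around the putative optimizer to force an Aubin--Talenti profile, and then use the Euler--Lagrange ODE to extract pointwise rigidity for the area function of geodesic spheres.

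In geodesic polar coordinates centered at the pole $o$, a radial function $f(r)$ on $\M^n$ satisfies $\|f\|_{L^{2^\ast}(\M^n)}^{2^\ast}=\int_0^\infty f(r)^{2^\ast} A(r)\,dr$ and $\|\nabla f\|_{L^2(\M^n)}^{2}=\int_0^\infty f'(r)^2 A(r)\,dr$, where $A(r)=\area(\partial B_r(o))$. The Bishop--Gromov comparison theorem, whose proof uses only $\sec\le 0$ and not the full Cartan--Hadamard conjecture, gives $A(r)\ge A_E(r):=n\omega_n r^{n-1}$ with equality at every $r$ iff $\M^n\equiv\rr^n$, and the local Euclidean structure yields $A(r)/r^{n-1}\to n\omega_n$ as $r\to 0^+$. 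The only information used from the unknown value of $\mathfrak C$ is the standing lower bound $\mathfrak C\ge C_E$ coming from concentration at $o$.

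A radial optimizer $f$ satisfies the Euler--Lagrange ODE $-(Af')'=\lambda Af^{2^\ast-1}$ for some $\lambda>0$, with $f>0$, $f'(0)=0$, and $f\to 0$ at infinity. Consider the one-parameter family $f_\mu(r):=\mu^{(n-2)/2}f(\mu r)$, which preserves the Euclidean Sobolev ratio but not the manifold one; by optimality of $f=f_1$ the map $\mu\mapsto J_{\M^n}(f_\mu):=\|f_\mu\|_{L^{2^\ast}(\M^n)}/\|\nabla f_\mu\|_{L^2(\M^n)}$ attains its global maximum $\mathfrak C$ at $\mu=1$. As $\mu\to\infty$ the profile $f_\mu$ concentrates at $o$; using the local asymptotic $A(r)/r^{n-1}\to n\omega_n$, a change of variables yields $\lim_{\mu\to\infty}J_{\M^n}(f_\mu)=J_{\rr^n}(\tilde f)\le C_E$, where $\tilde f(x):=f(|x|)$ is the Euclidean lift. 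Combining $\mathfrak C\ge C_E\ge J_{\rr^n}(\tilde f)$ with the maximality at $\mu=1$, and supplementing it with a quantitative strict-convexity analysis of the defect integrals $\int f^{2^\ast}(A-A_E)\,dr$ and $\int f'^2(A-A_E)\,dr$, should force $\mathfrak C=C_E$ and $J_{\rr^n}(\tilde f)=C_E$. Euclidean Aubin--Talenti rigidity then identifies $\tilde f$ (and hence $f$) as the explicit profile $f(r)=c(1+dr^2)^{-(n-2)/2}$ for some $c,d>0$.

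Plugging this profile back into the Euler--Lagrange ODE, and using that the Euclidean Aubin--Talenti already satisfies the Euclidean radial analogue of the same ODE, the residual correction reduces to $\bigl(A'(r)/A(r)-(n-1)/r\bigr)f'(r)\equiv 0$; since $f'(r)\ne 0$ for $r>0$, this forces $rA'(r)/A(r)\equiv n-1$, i.e.\ $A(r)=c'r^{n-1}$, and the boundary asymptotic $A(r)/r^{n-1}\to n\omega_n$ pins $c'=n\omega_n$, so $A\equiv A_E$; the equality case of Bishop--Gromov then implies that $\M^n$ is isometric to $\rr^n$. The main obstacle will be the preceding paragraph: closing the gap between $\mathfrak C\ge C_E$ and $\mathfrak C=C_E$ together with identifying $\tilde f$ as an Aubin--Talenti extremal, without invoking the Cartan--Hadamard conjecture. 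The dilation family yields the correct limit $J_{\rr^n}(\tilde f)$ at $\mu\to\infty$ but a priori leaves room for $\mathfrak C$ strictly larger than $C_E$, and ruling this out---while simultaneously forcing equality in the Euclidean Sobolev inequality for $\tilde f$---requires a careful quantitative application of the strict convexity of $t\mapsto t^{2^\ast/2}$ to the defect integrals above.
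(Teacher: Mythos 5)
Your framework (the one-dimensional reduction, the lower bound $\mathfrak C\ge C_{\mathrm E}$ from concentration at the pole, the Euler--Lagrange ODE, and the final reduction to $A\equiv A_{\mathrm E}$ plus a rigidity lemma) coincides with the paper's, but the central step is genuinely missing, and you have located it yourself. The dilation family only yields $\mathfrak C=J_{\M^n}(f_1)\ge\lim_{\mu\to\infty}J_{\M^n}(f_\mu)=J_{\rr^n}(\tilde f)$, which is strictly weaker than the chain $\mathfrak C\ge C_{\mathrm E}\ge J_{\rr^n}(\tilde f)$ you already have: it provides no upper bound on $\mathfrak C$ and no lower bound on $J_{\rr^n}(\tilde f)$, so it cannot pin either quantity to $C_{\mathrm E}$. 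The obstruction is structural: the naive lift $\tilde f(x)=f(|x|)$ decreases \emph{both} $\|f\|_{L^{2^\ast}}$ and $\|\nabla f\|_{L^2}$ (since $A\ge A_{\mathrm E}$), so the two Rayleigh quotients are a priori incomparable, and no convexity argument applied to the defect integrals can rescue this, because $\delta_1:=\int_0^\infty f^{2^\ast}(A-A_{\mathrm E})\,dr$ and $\delta_2:=\int_0^\infty |f'|^2(A-A_{\mathrm E})\,dr$ are both nonnegative with no a priori inequality between them in the direction you would need; establishing such an inequality is essentially equivalent to the theorem itself. What is required is a comparison that moves the two norms in \emph{opposite} directions, or fixes one exactly while improving the other.

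The paper supplies three such mechanisms, and the one closest to your ingredients is the Euler--Lagrange route. Normalizing so that $-(Af')'=Af^{2^\ast-1}$ and testing against $f$ gives the exact identity $\int_0^\infty |f'|^2A\,dr=\int_0^\infty f^{2^\ast}A\,dr$, which combined with $\mathfrak C\ge C_{\mathrm E}$ yields $\left(\int_0^\infty f^{2^\ast}A\,dr\right)^{(2^\ast-2)/2^\ast}\le C_{\mathrm E}^{-2}$. On the Euclidean side one does not get an identity, but the Laplacian comparison $A'/A\ge (n-1)/r$ together with $f'<0$ turns the ODE into the differential inequality $-f''-\tfrac{n-1}{r}f'\le f^{2^\ast-1}$, and testing \emph{that} against $f r^{n-1}$ gives $\int_0^\infty |f'|^2 A_{\mathrm E}\,dr\le\int_0^\infty f^{2^\ast}A_{\mathrm E}\,dr$; feeding this into the Euclidean Sobolev inequality for $\tilde f$ produces the reverse bound $\left(\int_0^\infty f^{2^\ast}A_{\mathrm E}\,dr\right)^{(2^\ast-2)/2^\ast}\ge C_{\mathrm E}^{-2}$. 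Subtracting forces $\int_0^\infty f^{2^\ast}(A-A_{\mathrm E})\,dr\le 0$, hence $A\equiv A_{\mathrm E}$ by positivity of $f$, with no need to identify $f$ as an Aubin--Talenti profile at all. (The paper's other two proofs instead change variables so that the gradient norm is exactly preserved while the $L^{2^\ast}$ norm can only decrease, or Schwarz-symmetrize so that all $L^p$ norms are preserved while the gradient norm decreases.) Two secondary points: your final step of subtracting the manifold and Euclidean ODEs requires matching the two Lagrange multipliers, which needs an extra normalization argument; and the implication ``$A\equiv A_{\mathrm E}$ on every sphere about $o$ implies $\M^n\equiv\rr^n$'' is not merely quoted in the paper but proved (Lemma \ref{isometry}), using the $\mathrm{CAT}(0)$ property rather than a Bishop--Gromov equality case.
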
 
Clearly the above theorem can be interpreted both in terms of \emph{nonexistence} and in terms of \emph{rigidity}, in the sense that as soon as $ \mathbb{M}^n \not \equiv \mathbb{R}^n $ there exists no (radial) optimal function to \eqref{eq-sob-euc} and, should such a function exist, it is necessarily an Aubin-Talenti profile on $ \mathbb{M}^n \equiv \mathbb{R}^n  $. Note that optimal functions are naturally sought in $ \dot{H}^1(\mathbb{M}^n)$, i.e.~the closure of $  C^1_c(\mathbb{M}^n)$ with respect to the $ L^2(\mathbb{M}^n) $ norm of the gradient. We will provide three different proofs of Theorem \ref{teo-nonex} in Section \ref{proofs}. We want to emphasize that none of them takes advantage of the Cartan-Hadamard conjecture; all of our arguments only rely on classical Laplacian and volume-comparison tools (Subsection \ref{lrs}), along with the specific structure of the inequality in the radially-symmetric framework. It is possible, however, that a stronger version of Theorem \ref{teo-nonex} can be obtained upon \emph{assuming} the Cartan-Hadamard conjecture (see Remark \ref{rrr}). 

\medskip 

The investigation of optimal constants in functional inequalities has a long story. As we have already commented, the very first result dealing with the optimal functions for the Euclidean Sobolev inequality is due to two simultaneous and independent papers by T.~Aubin \cite{Aubin2} and G.~Talenti \cite{Talenti}. In a series of subsequent articles \cite{Aubin1,Aubin3, Aubin Li}, Aubin continued the analysis of Sobolev-type inequalities and optimality issues on Riemannian manifolds. Some improvements on \cite{Aubin2} were then achieved by \cite{HV,Hebey ter}. At the level of rigidity results, in \cite{L99,Car2} it is shown, upon assuming curvature or volume-growth bounds \emph{from below}, respectively, that a Riemannian manifold supporting the Sobolev inequality \eqref{eq-sob-euc} with Euclidean constant is isometric to $\mathbb{R}^n$.

\medskip

Concerning Poincar\'e inequalities, H.P.~McKean \cite{McKean} proved that, if on a Cartan-Hadamard manifold the sectional curvature is bounded from above by a negative constant $ -k $, then in addition to \eqref{eq-sob-euc} there holds 
\begin{equation}\label{eq-poin-intro} 
\left\| f \right\|_{L^2(\mathbb M^n)} \le \frac{2}{\sqrt{k} \left( n-1 \right) } \left\| \nabla f \right\|_{L^2(\mathbb M^n)} \qquad \forall f \in C^1_c(\mathbb M^n)  \, .
\end{equation}
This is equivalent to the fact that the infimum of the spectrum of (minus) the Laplace-Beltrami operator on $ \mathbb{M}^n $ is bounded from below by the constant $ k(N-1)^2/4 $, in other words $ -\Delta $ has an explicit \emph{spectral gap}. Moreover, such constant is sharp since it is attained on the \emph{hyperbolic space} $ \mathbb{H}^n $ of curvature $ -k $. Also the requirement on the ``nondegeneracy'' of the curvature is, in some sense, sharp. Indeed, in \cite{LZ} it was shown that, on any complete noncompact Riemannian manifold, the (essential) spectrum of $ -\Delta $ starts from zero as soon as the Ricci curvature vanishes at infinity. An alternative, and much simpler proof of \eqref{eq-poin-intro} was carried out in \cite{MR}, by means of one-dimensional techniques which are to some extent related to the arguments we develop in Section \ref{proofs}. Such paper deals with the validity of (radial) inequalities that interpolate between \eqref{eq-sob-euc} and \eqref{eq-poin-intro}, under (power-type) bounds from above on the sectional curvature of $ \mathbb{M}^n $. In the special, but significant case of the hyperbolic space, it is worth quoting the recent contributions \cite{BGG}, where the Poincar\'e inequality is established with \emph{optimal remainder terms} of Hardy type, and \cite{Nguyen}, where the author proves a remarkable inequality on $ \mathbb{H}^n $ yielding simultaneously the \emph{optimal} Sobolev and Poincar\'e constants. In wider geometric settings, Hardy-type inequalities were also addressed in \cite{Car1}, for a class of nonstandard weights. 

\medskip

Finally, we recall that the Sobolev inequality \eqref{eq-sob-euc}, along with related Gagliardo-Nirenberg and Poincar\'e inequalities, was successfully exploited to prove (sharp) $ L^1 $-$L^\infty$ smoothing effects for the \emph{porous medium equation} \cite{GM16}  and finite-time extinction estimates for the \emph{fast diffusion equation} \cite{BGV08} on Cartan-Hadamard manifolds, thus reinforcing the well-known connection between (nonlinear) diffusion equations and functional inequalities. In this regard, we also mention \cite{GSC}, where Faber-Krahn inequalities on Riemannian manifolds are investigated and consequent \emph{heat-kernel} bounds are established. 

\section{Preliminary material}\label{sect2} 

In the following, we will provide an overview of the essential notions and tools that one needs to know when dealing with Cartan-Hadamard manifolds (Subsections \ref{nfrg}, \ref{lrs} and \ref{lc}), along with some well-established results regarding the Sobolev inequality, of which however we believe it is worth giving a direct proof, since we try to be as much as possible self contained (see in particular Subsections \ref{kernel} and \ref{radial-opt}). 
 
\subsection{Basics of Cartan-Hadamard manifolds}\label{nfrg} 

We recall that a Cartan-Hadamard manifold is an $n$-dimensional Riemannian manifold $ \left( M , \mathfrak g \right) $ which is complete, simply connected and has everywhere nonpositive sectional curvature. This assumption entails a very strong topological (and geometric) consequence, due to the Cartan-Hadamard theorem (see e.g.~\cite[Theorem 1.10]{Lee} or \cite[Theorem II.6.2]{Chavel}): the cut-locus of \emph{any} point $ o \in M $ is empty, so that the exponential map $ T_o M \equiv \mathbb{R}^n \ni y \mapsto \exp_o y \in M $ is actually a global diffeomorphism and therefore $ M $ is in particular a \emph{manifold with a pole} (we refer to \cite{GreeneWu} for an excellent monograph on this class of manifolds). More than that: any point can play the role of a pole. 

Before proceeding further, let us fix some notations. The (standard) symbol $ T_o M $ stands for the tangent space of $ M $ at $ o \in M $, and we recall that $ \exp_o $ is the map that to any element $ y \in T_o M $ associates the point reached at time $t=1$ by the constant-speed geodesic that starts from $ o $ at $t=0$ with velocity $y$. In general the exponential map is well defined only for small $  y $, but as we have just seen on Cartan-Hadamard manifolds it is in fact global.

 We employ the symbol ``$ \equiv $'' instead of ``$=$'' for identities that should be understood up to suitable (implicit) transformations. In the case of Riemannian manifolds, by $  (M_1,\mathfrak g_1) \equiv (M_2,\mathfrak g_2 ) $ we mean that $ M_1 $ is isometric to $ M_2 $, i.e.~there exists a diffeomorphism from $ M_1 $ onto $ M_2 $ which is also an isometry with respect to $ \mathfrak g_1 $ and $ \mathfrak g_2 $. Finally, in order to lighten notations, an $n$-dimensional Cartan-Hadamard manifold is simply denoted by $ \mathbb{M}^n $ and $ \operatorname{d}(\cdot,\cdot) $ is the corresponding distance on $ \mathbb{M}^n $ induced by  its metric $ \mathfrak g $.

At the level of curvatures, we denote by $ \mathrm{Sect}(x) $ the sectional curvature at $ x \in M $ with respect to a generic $2$-plan in the tangent space $ T_x M$, whereas $ \mathrm{Sect}_o(x) $ stands for the sectional curvature with respect to any $ 2 $-plan in $ T_x M $ {containing} the radial direction, also known as \emph{radial sectional curvature}. Similarly, we denote by $ \mathrm{Ric}(x) $ the Ricci curvature at $ x \in M $ as a quadratic form, whereas the number $ \mathrm{Ric}_o(x) $ stands for the Ricci curvature evaluated in the radial direction, i.e.~the \emph{radial Ricci curvature}. 

In the sequel, $ o \in \mathbb{M}^n $ will tacitly be considered a \emph{fixed} reference point elected as a pole, unless otherwise specified. In view of what we have recalled above, it is possible to exploit \emph{radial coordinates} about $o$, namely to any $ x \in \mathbb{M}^n \setminus \{o\} $ one can associate in a unique way a couple $ (r,\theta) \in (0,\infty) \times \mathbb{S}^{n-1} $, where $ \mathbb{S}^{n-1}  $ represents the $(n-1)$-dimensional unit sphere endowed with the usual round metric. Note that $ r $ is the distance between $x$ and $o$, while $ \theta $ is the starting direction of the geodesic that connects $ o $ to $ x $. In this way, the metric $ \mathfrak g  $ of $ \mathbb{M}^n $ at $ x \equiv (r,\theta) $ can be written as follows:
\begin{equation}\label{ch-metric}
 \mathfrak g = \mathrm{d}r^2 + \left\langle \mathsf{A}(r,\theta) \, \mathrm{d} \theta , \mathrm{d} \theta \right\rangle_{\theta} ,
\end{equation}
for a suitable linear map $ \mathsf{A}(r,\theta) $ giving rise to a quadratic form in the tangent space of $ \mathbb{S}^{n-1} $ at $ \theta $. Here the symbol $ \langle \cdot , \cdot \rangle_{\theta} $ stands for the inner product of  such tangent space that induces the norm $ \|  \cdot \|_\theta $, and in \eqref{ch-metric} we identify an element of the tangent space of $ \mathbb{M}^n $ at $ x \equiv (r,\theta) $ with $ (\mathrm{d}r,\mathrm{d}\theta) $, where $\mathrm{d}r$ is an arbitrary real number that represents displacement in the radial direction and $\mathrm{d}\theta$ is an element of the tangent space of $ \mathbb{S}^{n-1} $ at $\theta$, that represents angular displacement. 

To our purposes, a key role is played by the positive scalar function 
$$ 
A(r,\theta) := \sqrt{\operatorname{det}\!\left[ \mathsf{A}(r,\theta) \right]} \qquad \forall (r,\theta) \in (0,\infty) \times \mathbb{S}^{n-1} \, .
$$
In fact $ A(r,\theta) $ coincides with the density of the volume measure of $ \mathbb{M}^n $, which we denote by $ d\mu $, with respect to the product measure $ dr \otimes d\theta $. Here and below, with some abuse of notation, the symbol $ {d}r $ stands for the Lebesgue measure on $ (0,\infty) $ and $ {d}\theta $ for the volume (i.e.~surface) measure of $ \mathbb{S}^{n-1}$, still endowed with the standard round metric. It is plain that,  since the metric of $ \mathbb{M}^n $ is locally Euclidean, or more rigorously $ \mathfrak g $ is differentiable on $ \mathbb{M}^n $, in particular there holds
\begin{equation}\label{ee1}
\lim_{r \downarrow 0} \frac{A(r,\theta)}{r^{n-1}} = 1 \qquad \text{uniformly w.r.t.~} \theta \in \mathbb{S}^{n-1} \, .
\end{equation}

Let us denote by $ B_r $ the geodesic ball of radius $r>0$, implicitly centered at $ o $, i.e.~the open set of points in $ \mathbb{M}^n $ whose distance from $ o $ is less than $r$. If the center of the ball is another point $ x \neq o $, we will write more explicitly $ B_r(x) $. Similarly, the boundary of $ B_r $, that is the geodesic sphere of all points at distance $r$ from $o$, is denoted by $  S_r $. Note that $S_r$ itself is an  $ (n-1) $-dimensional Riemannian manifold embedded in $ \mathbb{M}^n $. From the definition of $ A(r,\theta) $, we infer that for any fixed $ r>0 $ the function $ \theta \mapsto A(r,\theta) $ is the density, with respect to $ d\theta $, of the volume (i.e.~surface) measure $d\sigma$ of $ S_r $; as a result,
\begin{equation}\label{def-meas}
\sigma(S_{r})=\int_{\mathbb{S}^{n-1}}A(r,\theta)\, d\theta  \, .
\end{equation}

\subsection{Laplace-Beltrami operator, radial functions and Sobolev spaces}\label{lrs}

After the previous introductory section, we are in position to describe more precisely the functional setting in which we work. First of all, given a smooth function $f$ on $ \mathbb{M}^n $, the \emph{Laplace-Beltrami operator} (also \emph{Laplacian} for short) applied to $f$ reads, in radial coordinates (see \cite[Section 3]{Grigor'yan} or \cite[Section 2.2]{GMV}), 
\begin{equation}\label{lap-belt}
\Delta f=\dfrac{\partial^2 f}{\partial r^2} + \mathsf{m}(r,\theta) \, \dfrac{\partial f}{\partial r}+\Delta_{S_{r}} f \, ,
\end{equation}
where $\Delta_{S_{r}}$ represents the Laplace-Beltrami operator on the submanifold $S_{r}$ and 
\begin{equation}\label{lap-m}
\mathsf{m}(r,\theta) := \dfrac{\partial }{\partial r} \!\left[ \log A(r,\theta) \right] \qquad \forall x \equiv (r,\theta) \in (0,\infty) \times \mathbb{S}^{n-1} \, .
\end{equation} 
It is immediate to check that in fact $ \mathsf{m}(r,\theta) $ coincides with the \emph{Laplacian of the distance} function $ r \equiv r(x) := \mathrm{d}(x,o) $, which is of key importance in the analysis of partial differential equations on manifolds due to crucial comparison results (see the next section). Note that, upon integrating \eqref{lap-m} from a fixed $r_0>0$ to $ r > r_0 $, we obtain the identity
\begin{equation}\label{eq:int-m}
\int_{r_{0}}^r \mathsf{m}(s,\theta)\, ds=\log A(r,\theta) - \log A(r_0,\theta) \qquad \forall (r, \theta) \in (r_0,\infty) \times \mathbb{S}^{n-1} \, ,
\end{equation}
that is 
\begin{equation*}\label{eq:int-A}
A(r,\theta)=e^{\int_{r_0}^r \mathsf{m}(s,\theta)\, ds +c_{\theta}} \quad \forall (r, \theta) \in (r_0,\infty) \times \mathbb{S}^{n-1} \, , \qquad \text{where } c_{\theta} := \log A(r_0,\theta) \, .
\end{equation*}
Strictly related to the Laplacian is the \emph{gradient} operator, which for $ C^1(\mathbb{M}^n)$ functions reads (in radial coordinates)
$$
\nabla  f \equiv \left( \tfrac{\partial f}{\partial r} , \nabla_{S_r} f  \right)  \qquad \Longrightarrow \qquad \left| \nabla f \right|^2 = \left| \tfrac{\partial f}{\partial r} \right|^2 +  \left\| \nabla_{S_r} f \right\|_\theta^2  ,
$$
where $ \nabla_{S_r} $ is in turn the gradient operator of the submanifold $  S_r$. Clearly both $ \Delta_{S_r} $ and $ \nabla_{S_r} $ can explicitly be written in terms of $ \mathsf{A}(r,\theta) $, which we avoid since we will only deal with \emph{radial} functions, namely functions on $ \mathbb{M}^n $ that depend solely on the radial coordinate, i.e.~$ f(r,\theta) \equiv f(r) $. In this special case, we adopt the simplified notation $ \frac{\partial f}{\partial r} \equiv f'  $. 

Given a measurable function $ f  : \mathbb{M}^n \to \mathbb{R} $ and $ p \in [1,\infty) $, we define its $ L^p(\mathbb{M}^n) $ norm as 
$$
\left\| f \right\|_{L^p(\mathbb{M}^n)}^p := \int_{\mathbb{M}^n} \left| f \right|^p d\mu = \int_0^{\infty}\int_{\mathbb{S}^{n-1}} \left|f(r,\theta)\right|^p \, A(r,\theta) \, d\theta dr \, .
$$
Analogously, for a $ C^1(\mathbb{M}^n) $ function, the $ L^2(\mathbb{M}^n) $ norm of its gradient is defined as 
$$
\left\| \nabla f \right\|_{L^2(\mathbb{M}^n)}^2 := \int_{\mathbb{M}^n} \left| \nabla f \right|^2 d\mu = \int_0^{\infty}\int_{\mathbb{S}^{n-1}} \left(  \left| \tfrac{\partial f}{\partial r} \right|^2 +  \left\| \nabla_{S_r} f \right\|_\theta^2 \right) A(r,\theta) \, d\theta dr \, . 
$$ 
In particular, upon setting
\begin{equation}\label{def-psistar}
\psi_\star(r) := \left[ \frac{ \int_{\mathbb{S}^{n-1}} A(r,\theta) \, d\theta}{ \left| \mathbb{S}^{n-1}  \right| } \right]^{\frac{1}{n-1}} \qquad \forall r > 0 \, ,
\end{equation}
where $ \left| \mathbb{S}^{n-1} \right| $ is the total surface measure of the $ (n-1) $-dimensional unit sphere, we deduce that for a $ C^1(\mathbb{M}^n) $ {radial function} $f$ there hold 
\begin{equation}\label{rad-lp}
\left\| f \right\|_{L^p(\mathbb{M}^n)}^p = \left| \mathbb{S}^{n-1} \right| \int_0^{\infty} \left| f(r) \right|^p \psi_\star(r)^{n-1} dr 
\end{equation}
and
\begin{equation}\label{rad-grad}
\left\| \nabla f \right\|_{L^2(\mathbb{M}^n)}^2 = \left| \mathbb{S}^{n-1} \right| \int_0^{\infty} \left| f'(r) \right|^2 \psi_\star(r)^{n-1} dr \, .
\end{equation}
The reason for the notation $ \psi_\star $ in \eqref{def-psistar} will be clearer in the next subsection. 

Finally, we denote by $ \dot{H}^1(\mathbb{M}^n) $ the Sobolev space defined as the closure of $ C^1_c(\mathbb{M}^n) $ with respect to $ \| \nabla (\cdot) \|_{L^2(\mathbb{M}^n)} $, endowed with the latter norm. It is apparent that all the above formulas still hold for functions in $ \dot{H}^1(\mathbb{M}^n) $, up to interpreting partial derivatives in the weak sense. Clearly the Sobolev inequality \eqref{eq-sob-euc} extends to the whole $ \dot{H}^1(\mathbb{M}^n) $, and it is (a priori) in this space that optimal functions should be sought. 

\subsection{Model manifolds, Laplacian and volume comparison}\label{lc}

A \emph{model manifold} is an $n$-dimensional Riemannian manifold $ \left( M , \mathfrak g \right) $ with a pole $ o \in M $ whose metric can be written, with respect to the radial coordinates about $ o $, as (see \cite[Section 3.10]{Grig09})
\begin{equation*}\label{model metric}
 \mathfrak g = \mathrm{d}r^2 + \psi(r)^2 \left\| \mathrm{d}\theta \right\|^2 _{\theta} ,
\end{equation*}
where $\psi: [0,\infty ) \to  [0,\infty ) $ is a function belonging to the class
\begin{equation}\label{def-A} 
\mathcal{F} := \left\{ \psi\in C^{\infty}((0,\infty))\cap C^{1}([0,\infty)) : \ \psi(0)=0 \, , \ \psi(r)>0 \ \, \forall r>0 \, , \ \psi^\prime(0)=1 \right\} .
\end{equation}
In other words, it corresponds to the particular case of \eqref{ch-metric} when $ \mathsf{A}(r,\theta) $ is the identity times $ \psi(r)^2 $. Hence,  it follows that $ A(r,\theta) = \psi(r)^{n-1} $. For instance, the \emph{Euclidean space} $\mathbb{R}^n$ corresponds to $ \psi(r)=r $, while the \emph{hyperbolic space} $ \mathbb{H}^n $ corresponds to $ \psi(r)= \sinh r $. Note that, in general, a model manifold need not be Cartan-Hadamard: the latter property is equivalent to requiring that $ \psi $ is in addition convex. Outside the class of Cartan-Hadamard manifolds, we recover the \emph{unit sphere} $ \mathbb{S}^{n-1} $ with the choice $ \psi(r) =\sin r $, at least for $ r$ ranging in the bounded interval $ [0,\pi) $.  
  
Having introduced model manifolds, we can briefly recall some classical results that compare, in radial coordinates, the Laplacian of the distance function (w.r.t.~to a given pole $o $) of a Cartan-Hadamard manifold $ \mathbb{M}^n $ with the Laplacian of the distance function of the model manifold which equals the curvature bounds. More precisely, if 
\begin{equation}\label{comp-sect}
\mathrm{Sect}_o(x)\leq - \dfrac{\psi''(r)}{\psi(r)} \qquad  \forall (r,\theta) \equiv x \in \mathbb{M}^n \setminus\lbrace o\rbrace
\end{equation}
for some function $\psi\in\mathcal{F}$, then
\begin{equation}\label{comp-sect-2}
\mathsf{m}(r,\theta) \geq (n-1) \, \dfrac{\psi'(r)}{\psi(r)} \qquad \forall (r,\theta) \in (0,\infty) \times \mathbb{S}^{n-1} \, .
\end{equation}
Similarly, if 
\begin{equation}\label{ricci}
\mathrm{Ric}_o(x) \geq- (n-1) \, \dfrac{\psi''(r)}{\psi(r)} \qquad \forall (r,\theta) \equiv x \in \mathbb{M}^n \setminus \lbrace o \rbrace 
\end{equation}
for another function $\psi\in\mathcal{F}$, then
\begin{equation*}
\mathsf{m}(r,\theta)\leq (n-1) \,\dfrac{\psi'(r)}{\psi(r)} \qquad \forall  (r,\theta) \in (0,\infty) \times \mathbb{S}^{n-1} \, .
\end{equation*} 
We point out that the equality cases of the above inequalities do correspond to model manifolds, i.e.~the radial sectional curvature of a model manifold coincides with the right-hand side of \eqref{comp-sect}, and the same holds for the radial Ricci curvature in \eqref{ricci}. Moreover, the Laplacian of the distance function on a model manifold is also a radial function that equals the right-hand side of \eqref{comp-sect-2}. For further details, see e.g.~\cite[Section 2.2]{GMV} and references therein. Our entire focus here is on Cartan-Hadamard manifolds. We mention, however, that the above comparison results do hold in much more general Riemannian frameworks, up to a possible weak interpretation of the inequalities: we refer the reader to \cite[Sections 1.2.3 and 1.2.5]{MRS} (see also \cite[Section 2]{GreeneWu} or \cite[Section 15]{Grigor'yan}). 

Because a Cartan-Hadamard manifold has everywhere nonpositive sectional curvature, by applying \eqref{comp-sect} and \eqref{comp-sect-2} with the trivial choice $ \psi(r) = r $ we immediately deduce that
\begin{equation}\label{lap-euc}
\mathsf{m}(r,\theta) \ge \frac{n-1}{r} \qquad \forall (r,\theta) \in (0,\infty) \times \mathbb{S}^{n-1} \, .
\end{equation}
This simple inequality has a key consequence that will be crucial to our strategy, namely the fact that the volume measure of $ \mathbb{M}^n $ is larger than the Euclidean one:
\begin{equation}\label{main-ineq}
A(r,\theta) \ge r^{n-1} \qquad \forall (r,\theta) \in (0,\infty) \times \mathbb{S}^{n-1} \, .
\end{equation}
To establish \eqref{main-ineq} let us notice that, by virtue of \eqref{eq:int-m} and \eqref{lap-euc}, for every $ r_0>0 $ there holds
$$
\log\!\left( \frac{r^{n-1}}{r_0^{n-1}} \right) \le \log \! \left( \frac{A(r,\theta)}{A(r_0,\theta)} \right) \qquad \forall (r,\theta) \in (r_0,\infty) \times \mathbb{S}^{n-1} \, ,
$$
so that by taking exponentials and letting $ r_0 \downarrow 0 $, using \eqref{ee1}, we obtain \eqref{main-ineq}. 

We mention that \eqref{main-ineq} is the analogue, in the very special Cartan-Hadamard setting, of the celebrated Bishop-Gromov comparison theorem: see e.g.~\cite[Theorem 1.1]{Hebey} or \cite[Theorem 1.13]{MRS} for a more general statement. As a particular case of the latter, one deduces that the volume of geodesic balls of a Riemannian manifold with nonnegative \emph{Ricci} curvature is at most Euclidean. On Cartan-Hadamard manifolds, given the nonpositive \emph{sectional} curvature, we have the opposite inequality.

\subsection{A simple proof of the Sobolev inequality on Cartan-Hadamard manifolds}\label{kernel}

For completeness, we provide an elementary proof of the validity of the Sobolev inequality on any $ n $-dimensional ($ n \ge 3 $) Cartan-Hadamard manifold. This is by now a well-established result and, as a consequence of the Cartan-Hadamard conjecture recently settled by Ghomi and Spruck \cite{GS}, also the optimal constant is known to be Euclidean. A proof of the $1$-Sobolev inequality, from which the standard Sobolev inequality \eqref{eq-sob-euc} easily follows (see \cite[Lemma 8.1]{Hebey}), can be found e.g.~in \cite[Theorem 8.3]{Hebey}. 

Our argument goes as follows. Let $ \mathcal{K}(x,y,t) $ be the \emph{heat kernel} of $ \mathbb{M}^n $, namely the (minimal) solution to
\begin{equation}\label{eee}
\begin{cases}
\tfrac{\partial}{\partial t} \mathcal{K}(\cdot,y,\cdot) = \Delta \mathcal{K}(\cdot,y,\cdot) & \text{in } \mathbb{M}^n \times (0,+\infty) \, , \\
 \mathcal{K}(\cdot,y,0) = \delta_y & \text{in } \mathbb{M}^n \, ,
\end{cases}
\end{equation}
where $ \delta_y $ stands for the Dirac delta centered at a given but arbitrary $ y \in \mathbb{M}^n $. Let $  \mathcal{K}_{\mathrm{E}} $ denote the \emph{Euclidean heat kernel}, that is
\begin{equation*}\label{kernel-euc}
\mathcal{K}_{\mathrm{E}}(r,t) = \frac{e^{-\frac{r^2}{4t}}}{\left( 4\pi t \right)^{\frac{n}{2}}} \qquad \forall (r,t) \in [0,\infty) \times (0,+\infty) \, ,
\end{equation*}
which solves the analogue of \eqref{eee} in $ \mathbb{R}^n $ with $r$ replaced by $ |x-y| $. For each $ y \in \mathbb{M}^n $, the function $ \mathbb{M}^n \times (0,+\infty) \ni (x,t) \mapsto \mathcal{K}_{\mathrm{E}}(\operatorname{d}(x,y),t) $ turns out to be a \emph{supersolution} to \eqref{eee}. Indeed, it is plain that $ \frac{\partial }{\partial r} \mathcal{K}_{\mathrm{E}} \le 0 $; hence, from Laplacian comparison (recall \eqref{lap-belt} and \eqref{lap-euc}), we have:
$$
\frac{\partial }{\partial t} \mathcal{K}_{\mathrm{E}} = \frac{\partial^2 }{\partial r^2} \mathcal{K}_{\mathrm{E}} + \frac{n-1}{r} \frac{\partial }{\partial r} \mathcal{K}_{\mathrm{E}} \ge \frac{\partial^2 }{\partial r^2} \mathcal{K}_{\mathrm{E}} + \mathsf{m}(r,\theta) \frac{\partial }{\partial r} \mathcal{K}_{\mathrm{E}} \, .
$$
Upon setting $ r \equiv r(x) := \operatorname{d}(x,y) $, the above inequality is equivalent to the fact that $(x,t) \mapsto \mathcal{K}_{\mathrm{E}}(\operatorname{d}(x,y),t)$ is a supersolution to the differential equation in \eqref{eee}. On the other hand, because the volume measure of $ \mathbb{M}^n $ is locally Euclidean, i.e.~\eqref{ee1} holds, it is straightforward to check that  this function also attains a Dirac delta centered at $ y $ as $ t \downarrow 0 $. Hence, by the comparison principle and the arbitrariness of $y$, we infer that
\begin{equation}\label{eq-comp}
\mathcal{K}(x,y,t) \le \mathcal{K}_{\mathrm{E}}(\operatorname{d}(x,y),t) \le \frac{1}{\left( 4\pi t \right)^{\frac{n}{2}}} \qquad \forall (x,y,t) \in \mathbb{M}^n \times \mathbb{M}^n \times (0,+\infty) \, .
\end{equation}
As concerns the just mentioned comparison principle, we limit ourselves to observing that the latter can rigorously be established by both approximating $ \delta_y $ with a sequence of smooth radially decreasing data and filling $ \mathbb{M}^n $ with a sequence of geodesic balls centered at $y$, solving the analogues of \eqref{eee} with homogeneous Dirichlet boundary conditions.

Once \eqref{eq-comp} has been proved, \eqref{eq-sob-euc} is then a consequence of well-known equivalence results between pointwise heat-kernel bounds and the validity of Sobolev-type inequalities: see e.g.~\cite[Corollary 14.23]{Grig09} or \cite[Lemma 2.1.2 and Theorem 2.4.2]{Dav}. 

We point out that, in the above argument, the optimality of the constants might be lost in the passage from the bound \eqref{eq-comp} to \eqref{eq-sob-euc}. Hence to claim \eqref{eq-sob-euc} with \emph{Euclidean} constant  \eqref{eq-comp} is not enough, and we necessarily have to invoke the breakthrough result \cite[Theorem 1.1]{GS} combined with \cite[Proposition 8.2]{Hebey}. \hfill \qed 

\subsection{The optimal Sobolev constant is not smaller than the Euclidean one}\label{radial-opt}

The fact that the optimal constant $ \mathfrak C $ in the Sobolev inequality \eqref{eq-sob-euc} cannot be smaller than the Euclidean optimal constant $ C_{\mathrm{E}} $, which is attained in $ \mathbb{R}^n $ by the Aubin-Talenti functions (see \cite{Aubin2,Talenti})
\begin{equation}\label{AT}
f_{b}(x) \equiv f_{b}(|x|) := \left( 1 + b \left| x \right|^2 \right)^{-\frac{n-2}{2}} \quad \forall x \in \mathbb{R}^n \, , \qquad \text{where } b>0 \text{ is an arbitrary constant} ,
\end{equation}
is a plain consequence of the local Euclidean structure of $ \mathbb{M}^n $, and it is actually true on \emph{any} $n$-dimensional Riemannian manifold where \eqref{eq-sob-euc} holds. Note that in \eqref{AT} there should appear a further degree of freedom due to translations and another one due to multiplications, which we omit since it is inessential to our purposes (we only need scaling invariance). As observed in the Introduction, after \cite{GS} we can assert that in fact $ \mathfrak C = C_{\mathrm{E}} $. Nevertheless, because in Section \ref{proofs} we will only take advantage of the (crucial) inequality $ C_{\mathrm{E}} \le \mathfrak C $, we believe it is worth providing a direct (elementary and classical) proof. 

To this end, first of all note that, thanks to \eqref{ee1} and \eqref{main-ineq}, for every $ \varepsilon \in (0,1) $ there exists a positive constant $ c(\varepsilon) $ such that 
\begin{equation}\label{loc-vol}
r^{n-1} \le A(r,\theta) \le \left( 1+c(\varepsilon) \right) r^{n-1} \quad \forall (r,\theta) \in (0,\varepsilon) \times \mathbb{S}^{n-1}  \, , \qquad \lim_{\varepsilon \downarrow 0} c(\varepsilon) = 0 \, .
\end{equation}
We can therefore exploit \eqref{loc-vol} along with the explicit expression of the Aubin-Talenti functions. Let us consider the following ``truncated'' versions of \eqref{AT}: given $ \varepsilon \in (0,1) $, we set 
$$
f_{b,\varepsilon}(x) \equiv f_{b,\varepsilon}(|x|) := \left[ f_{b}(|x|) - f_{b}(\varepsilon) \right]^+ \qquad \forall x \in \mathbb{R}^n \, .
$$
It is readily seen that 
$$
\lim_{b \to \infty} \frac{\left\| f_{b,\varepsilon} \right\|_{L^{2^\ast}\!(\mathbb{R}^n)}}{\left\| \nabla f_{b,\varepsilon} \right\|_{L^2(\mathbb{R}^n)}}  = C_{\mathrm{E}} \qquad \forall \varepsilon \in (0,1) \, ,
$$
because {$ f_{b} $}, and hence also $ f_{b,\varepsilon} $, is concentrating at the origin as $ b \to  \infty $. Consider now the function $ g_{b,\varepsilon}(x) := f_{b,\varepsilon}(\operatorname{d}(x,o)) $, which belongs to $ \dot{H}^1(\mathbb{M}^n) $ and is supported by construction in $ \overline{B}_{\varepsilon} $. Thanks to \eqref{loc-vol} and the fact that $ g_{b,\varepsilon} $ is radial, recalling {\eqref{rad-lp} and \eqref{rad-grad}}, for every $ \varepsilon \in (0,1) $ there hold
$$
\begin{gathered}
\left\| f_{b,\varepsilon} \right\|_{L^{2^\ast}\!(\mathbb{R}^n)} \le \left\| g_{b,\varepsilon} \right\|_{L^{2^\ast}\!(\mathbb{M}^n)}  \le \left( 1+c(\varepsilon) \right)^{\frac{1}{2^\ast}} \left\| f_{b,\varepsilon} \right\|_{L^{2^\ast}\!(\mathbb{R}^n)} , \\
\left\| \nabla f_{b,\varepsilon} \right\|_{L^{2}(\mathbb{R}^n)} \le \left\| \nabla g_{b,\varepsilon} \right\|_{L^2(\mathbb{M}^n)}  \le \left( 1+c(\varepsilon) \right)^{\frac{1}{2}} \left\| \nabla f_{b,\varepsilon} \right\|_{L^{2}(\mathbb{R}^n)} .
\end{gathered}
$$
As a consequence, since the definition of $ \mathfrak C $ yields
$$
\frac{\left\| f_{b,\varepsilon} \right\|_{L^{2^\ast}\!(\mathbb{R}^n)} }{\left( 1+c(\varepsilon) \right)^{\frac{1}{2}} \left\| \nabla f_{b,\varepsilon} \right\|_{L^{2}(\mathbb{R}^n)}} \le  \frac{\left\| g_{b,\varepsilon} \right\|_{L^{2^\ast}\!(\mathbb{M}^n)}  }{\left\| \nabla g_{b,\varepsilon} \right\|_{L^2(\mathbb{M}^n)} } \le \mathfrak C \qquad \forall b>0 \, , \ \forall \varepsilon \in (0,1) \, ,
$$
by letting $ b \to\infty $ we infer that 
$$
\frac{C_{\mathrm{E}}}{\left( 1+c(\varepsilon) \right)^{\frac{1}{2}}} \le \mathfrak{C} \qquad \forall \varepsilon \in (0,1) \, ,
$$
whence the thesis upon letting $ \varepsilon \downarrow 0 $. \hfill \qed 

\section{The proof(s)}\label{proofs}

We provide three different proofs of Theorem \ref{teo-nonex}. The conclusion of each of them will be that the volume measure of $ \mathbb{M}^n $ is purely Euclidean, under the existence of an optimal radial profile for \eqref{eq-sob-euc}. For this reason, we first need a (rather intuitive) result ensuring that such property means that the Cartan-Hadamard manifold at hand is (isometric to) the Euclidean space. 

\begin{lemma}\label{isometry}
Let $ \mathbb{M}^n $ be a Cartan-Hadamard manifold. Suppose that its volume measure is Euclidean, that is
$$
A(r,\theta) = r^{n-1} \qquad \forall (r,\theta) \in (0,\infty) \times \mathbb{S}^{n-1} 
$$
with respect to radial coordinates about a fixed pole $ o \in \mathbb{M}^n $. Then $ \mathbb{M}^n \equiv \mathbb{R}^n $. 
\end{lemma}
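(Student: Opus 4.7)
The plan is to invoke the equality case in the Laplacian--Hessian comparison on Cartan-Hadamard manifolds. First, differentiating the hypothesis $A(r,\theta)=r^{n-1}$ in $r$ and using \eqref{lap-m} yields
\[
\mathsf{m}(r,\theta) = \frac{n-1}{r} \qquad \forall\,(r,\theta)\in(0,\infty)\times\mathbb{S}^{n-1},
\]
so equality holds everywhere in the Laplacian comparison inequality \eqref{lap-euc}.

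Next I would promote this trace identity into an identity of symmetric bilinear forms on $TS_r$. On any Cartan-Hadamard manifold, the classical Hessian comparison theorem---the bilinear-form refinement of \eqref{lap-euc}, proved via the matrix Riccati equation for $\Hess(r)$ together with the sign assumption on the sectional curvature---gives
\[
\Hess(r) \;\ge\; \frac{1}{r}\,\mathsf{A}(r,\theta) \qquad \text{as bilinear forms on } T_\theta\mathbb{S}^{n-1}.
\]
Since the difference is a positive-semidefinite symmetric bilinear form whose trace vanishes by the first step, it must be identically zero, so $\Hess(r)=(1/r)\,\mathsf{A}(r,\theta)$ on $T_\theta\mathbb{S}^{n-1}$ for every $(r,\theta)$.

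Finally I would integrate this information back into the metric. A routine computation in polar coordinates (using $[\partial_r,\partial/\partial\theta^i]=0$ and the symmetry of $\Hess(r)$) shows that $\partial_r \mathsf{A}(r,\theta) = 2\,\Hess(r)|_{T_\theta\mathbb{S}^{n-1}}$, so the previous step leads to the linear matrix ODE
\[
\partial_r \mathsf{A}(r,\theta) = \frac{2}{r}\,\mathsf{A}(r,\theta).
\]
Integrating, and matching the initial behaviour $\mathsf{A}(r,\theta)/r^2 \to $ (the round metric at $\theta$) as $r\downarrow 0$---the bilinear-form analogue of \eqref{ee1}, which follows from Gauss's lemma together with the fact that $\exp_o$ is a local isometry at the origin---one obtains $\langle \mathsf{A}(r,\theta)\,d\theta,d\theta\rangle_\theta = r^2\|d\theta\|_\theta^2$. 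In view of \eqref{ch-metric} this means $\mathfrak{g}=dr^2+r^2\|d\theta\|_\theta^2$, which is precisely the Euclidean metric written in polar coordinates, so $\mathbb{M}^n\equiv\mathbb{R}^n$.

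The main technical point I anticipate is the need for the Hessian comparison at the full bilinear-form level, rather than just the trace version \eqref{lap-euc} explicitly recalled in the paper. This can be circumvented by working one-dimensionally along each fixed radial geodesic with the Jacobi endomorphism: a Sturmian eigenvalue comparison shows that each of its $n-1$ eigenvalues is at least $1/r$, so when their sum equals $(n-1)/r$ (by the first step) they must all equal $1/r$, and the remainder of the argument proceeds identically.
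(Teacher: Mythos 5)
Your proof is correct, but it takes a genuinely different route from the paper's. You argue infinitesimally: equality in the trace (Laplacian) comparison forces equality in the bilinear-form (Hessian) comparison, since the difference $\Hess(r)-\tfrac1r\mathsf{A}$ is positive semidefinite with vanishing $\mathsf{A}$-trace, and the resulting matrix ODE $\partial_r\mathsf{A}=\tfrac2r\mathsf{A}$, integrated against the Euclidean asymptotics of $\mathsf{A}$ at the pole, reconstructs the metric as $\mathrm{d}r^2+r^2\|\mathrm{d}\theta\|_\theta^2$. The paper instead argues synthetically and measure-theoretically: it uses the $\mathrm{CAT}(0)$ comparison $\operatorname{d}(\exp_o y_1,\exp_o y_2)\ge|y_1-y_2|$, notes that under the hypothesis the volume measure of $\mathbb{M}^n$ is exactly Lebesgue measure in the exponential chart at $o$, and observes that a strict inequality anywhere would force a geodesic ball centered at some $x_1\neq o$ to have volume strictly smaller than the Euclidean ball of the same radius, contradicting \eqref{main-ineq} applied with pole $x_1$. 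The paper's route needs only the scalar volume comparison already established plus a standard comparison-triangle fact, and never touches the Hessian; yours requires the Hessian comparison at the level of symmetric bilinear forms, which is a strictly stronger input than the trace version \eqref{lap-euc} recalled in the paper --- but it is classical, you correctly flag the issue, and your Riccati/Sturm workaround (controlling the smallest eigenvalue of the shape operator along each radial geodesic) is essentially the standard proof of that comparison, so the gap is only one of citation, not of substance. In exchange, your argument identifies the metric explicitly and exhibits $\exp_o$ itself as the isometry, rather than deducing distance preservation indirectly.
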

\begin{proof}
We already know that the exponential map $ \mathbb{R}^n \ni y \mapsto  \exp_o y \in \mathbb{M}^n $ is a diffeomorphism, by the Cartan-Hadamard theorem (recall Subsection \ref{nfrg}). Let us show that it is also an isometry. Given any two points $ x_1 = \exp_o y_1 $ and $ x_2 = \exp_o y_2 $, because a Cartan-Hadamard manifold is a $ \mathrm{CAT}(0) $ space (see \cite[Theorem 1.3.3]{Bac} or \cite[Excercise IV.12]{Chavel}) there holds
\begin{equation}\label{cat}
\operatorname{d}(x_1,x_2) \ge \left| y_1-y_2 \right| ,
\end{equation}
i.e.~the length of the side of a geodesic triangle in $ \mathbb{M}^n $ opposite to the angle formed by the first two sides is not smaller than the length of side of the Euclidean triangle whose first two sides have the same length and angle. Our aim is to prove that \eqref{cat} is in fact an identity. Suppose by contradiction that there exist $ x_1 , \tilde{x}_2 \in \mathbb{M}^n $ such that
\begin{equation*}\label{cat-2}
r:= \operatorname{d}(x_1,\tilde{x}_2) > \left| y_1-\tilde{y}_2 \right| .
\end{equation*}
It is plain that \eqref{cat} yields 
\begin{equation*}\label{cat-3}
\left(\exp_o\right)^{-1}\!\left( B_r(x_1) \right) \subset B^{\mathrm{E}}_r(y_1) \, ,
\end{equation*}
where $ B^{\mathrm{E}}_r(y_1) $ stands for the Euclidean ball of radius $r$ centered at $ y_1 $. Hence, by continuity and the fact that the exponential map is a diffeomorphism, we deduce that actually there exists a nonempty open set $ \Omega \subset B^{\mathrm{E}}_r(y_1) $ such that
\begin{equation*}\label{cat-4}
\left(\exp_o\right)^{-1}\!\left( B_r(x_1) \right) \subset B^{\mathrm{E}}_r(y_1) \setminus \Omega \, .
\end{equation*}
Since, by assumption, the volume measure $ d\mu $ of $ \mathbb{M}^n $ is Euclidean, this would imply
$$
\mu\!\left(  B_r(y_1) \right) = \int_{\left(\exp_o\right)^{-1}\!\left( B_r(x_1) \right)} dy \le \int_{B^{\mathrm{E}}_r(y_1) \setminus \Omega} dy < \left| B^{\mathrm{E}}_r(y_1) \right| ,
$$
where $ dy $ denotes the $ n $-dimensional Lebesgue measure and $ \left| \cdot \right|  $ the corresponding volume of measurable sets. However, due to volume comparison (see \eqref{main-ineq} in Subsection \ref{lc}), this yields a contradiction since $ \mu\!\left(  B_r \right) \ge \left| B^{\mathrm{E}}_r \right| $ on any Cartan-Hadamard manifold, independently of the pole where $ B_r $ is centered.
\end{proof}

We are now in position to prove Theorem \ref{teo-nonex}.

\subsection{First proof: a weighted Euclidean inequality}\label{euc-weight}

The starting point consists of exploiting a suitable modification of the radial change of variables introduced in \cite[Section 7]{GMV} (see also \cite[Section 6]{VazHyp} in the case of the hyperbolic space). That is, let us set
$$
\frac{ds}{s^{n-1}} = \frac{dr}{\psi_\star(r)^{n-1}} \, ,
$$
or more precisely
\begin{equation}\label{chv}
\frac{1}{(n-2) s^{n-2}} = \int_{r}^{\infty}  \frac{dt}{\psi_\star(t)^{n-1}}  \, ,
\end{equation}
where $ \psi_\star $ is as in \eqref{def-psistar}. It is not difficult to check that $ \psi_\star $ belongs to the class $ \mathcal{F} $ defined in \eqref{def-A}. Moreover, $ \psi_\star' \ge 1  $ everywhere. Indeed, by combining \eqref{lap-m}, \eqref{lap-euc} and \eqref{main-ineq}, we have
\begin{equation}\label{psi-prime}
\psi_\star'(r) = \frac{\int_{\mathbb{S}^{n-1}} \frac{\partial}{\partial r} A(r,\theta) \, d\theta}{(n-1) \left| \mathbb{S}^{n-1}  \right|} \left[ \frac{ \int_{\mathbb{S}^{n-1}} A(r,\theta) \, d\theta}{ \left| \mathbb{S}^{n-1}  \right| } \right]^{\frac{1}{n-1}-1} \ge \frac{1}{r} \left[ \frac{ \int_{\mathbb{S}^{n-1}} A(r,\theta) \, d\theta}{ \left| \mathbb{S}^{n-1}  \right| } \right]^{\frac{1}{n-1}}\ge 1 \, .
\end{equation}
As a consequence, 
$$
\frac{1}{(n-2) s^{n-2}} \le \int_{r}^{\infty}  \frac{\psi_\star'(t)}{\psi_\star(t)^{n-1}} \, dt = \frac{1}{(n-2) \psi_\star(r)^{n-2}}  \, ,
$$
that is
\begin{equation}\label{we1}
\rho(s) := \frac{\psi_\star(r(s))}{s} \le 1 \qquad \forall s>0 \, .
\end{equation}
Let us write Rayleigh quotients in terms of the new variable $s$. To this end, given a (nontrivial) radial function $ f \equiv f(r) \in C^1_c(\mathbb{M}^n) $, we can construct another radial function $ \hat{f} \equiv \hat{f}(s)  := f(r(s)) \in C^1_c(\mathbb{R}^n) $, where $r(s)$ is obtained according to \eqref{chv}. It is plain that, for every $ p \in [1,\infty) $, the following identities hold (recall \eqref{rad-lp}): 
$$
\frac{ \left\|  f \right\|_{L^p(\mathbb{M}^n)}^p } {  \left| \mathbb{S}^{n-1} \right| } = \int_0^{\infty} \left| f(r) \right|^p \psi_\star(r)^{n-1} dr = \int_0^{\infty} \left| \hat f(s) \right|^p \rho(s)^{2(n-1)} s^{n-1} ds = \frac { \left\| \hat{f} \right\|_{L^p_\rho(\mathbb{R}^n)}^p } { \left| \mathbb{S}^{n-1} \right| } ,
$$
where for a function $ g \in L^p(\mathbb{R}^n) $ we set
$$
\left\| g \right\|_{L^p_\rho(\mathbb{R}^n)}^p := \int_{\mathbb{R}^n} \left| g(y) \right|^p \rho(|y|)^{2(n-1)} dy \, .
$$
Similarly (recall \eqref{rad-grad}), we have: 
$$
\begin{aligned}
\frac{ \left\| \nabla f \right\|_{L^2(\mathbb{M}^n)}^2 } {  \left| \mathbb{S}^{n-1} \right| } =  \int_0^{\infty} \left| f'(r) \right|^2 \psi_\star(r)^{n-1} dr = & \int_0^{\infty} \left| \hat{f}'(s) \, \frac{s^{n-1}}{\psi_\ast(r(s))^{n-1} } \right|^2 \frac{\psi_\ast(r(s))^{2(n-1)} }{s^{n-1}} \, ds \\
 = & \int_0^{\infty} \left| \hat{f}'(s)  \right|^2 s^{n-1}  ds \\
 = & \frac{ \left\| \nabla  \hat{f} \right\|_{L^2(\mathbb{R}^n)}^2 } {  \left| \mathbb{S}^{n-1} \right| } .
\end{aligned} 
$$
Hence, by virtue of \eqref{we1} and the Euclidean Sobolev inequality, we deduce that
\begin{equation}\label{sob-in}
\frac{\left\| \nabla f \right\|_{L^2(\mathbb{M}^n)}}{\left\|  f \right\|_{L^{2^\ast}\!(\mathbb{M}^n)}} = \frac{\left\| \nabla \hat f \right\|_{L^2(\mathbb{R}^n)}}{\left\|  \hat f \right\|_{L^{2^\ast}_\rho\!(\mathbb{R}^n)}} \ge  \frac{\left\| \nabla \hat f \right\|_{L^2(\mathbb{R}^n)}}{\left\|  \hat f \right\|_{L^{2^\ast}\!(\mathbb{R}^n)}}  \ge \frac{1}{C_{\mathrm{E}}} \, .
\end{equation}
Note that \eqref{sob-in} yields equivalence between the (radial) Sobolev inequality on $ \mathbb{M}^n $ and a (radial) weighted Euclidean Sobolev inequality. Clearly the latter can be extended to any nontrivial $ f  \in \dot{H}^1(\mathbb{M}^n) $ and therefore any nontrivial $ \hat{f} \in \dot{H}^1(\mathbb{R}^n) $, still in the radial framework. Suppose now that $ u \in \dot{H}^1(\mathbb{M}^n)  $ is a radial optimal function for the Sobolev inequality in $ \mathbb{M}^n $. Since we know from Subsection \ref{radial-opt} that the corresponding best constant cannot be smaller than the Euclidean one, from \eqref{sob-in} applied to $ f=u $ we deduce that in fact equality holds, whence
$$
\frac{\left\| \nabla \hat u \right\|_{L^2(\mathbb{R}^n)}}{\left\|  \hat u \right\|_{L^{2^\ast}\!(\mathbb{R}^n)}}  = \frac{1}{C_{\mathrm{E}}} \, .
$$
This means that $ \hat{u} $ is necessarily an Aubin-Talenti profile and 
$$
\left\|  \hat u \right\|_{L^{2^\ast}_\rho\!(\mathbb{R}^n)} = \left\|  \hat u \right\|_{L^{2^\ast}\!(\mathbb{R}^n)} \qquad \Longrightarrow \qquad \int_0^{\infty} \left| \hat{u}(s)\right|^{2^\ast} \! \left( 1-\rho(s) \right) ds = 0 \, .
$$
Because $ \hat u $ is everywhere positive (recall \eqref{AT}) and we know that $ \rho(s) \le 1 $ for all $ s > 0 $, we infer that $ \rho(s)=1 $ for all $s>0$; from the definition of $ \rho(s) $, there follows $ \psi_\star(r(s)) = s $ for all $ s>0 $. In view of \eqref{chv}, this identity can be rewritten as
$$
\psi_\star(r)^{n-2} = s(r)^{n-2} = \frac{1}{(n-2)  \int_{r}^{\infty}  \frac{dt}{\psi_\star(t)^{n-1}} } \qquad \forall r > 0 \, , 
$$
that is
$$
 \frac{d}{dr} \! \left( \int_{r}^{\infty}  \frac{dt}{\psi_\star(t)^{n-1}}  \right)  = - \left[ (n-2)  \int_{r}^{\infty}  \frac{dt}{\psi_\star(t)^{n-1}}  \right]^{\frac{n-1}{n-2}} \qquad \forall r > 0 \, , 
$$
which upon integration yields
$$
\int_{r}^{\infty}  \frac{dt}{\psi_\star(t)^{n-1}} = \frac{1}{(n-2) r^{n-2}} \qquad \forall r > 0 \, ,
$$
so that  $ s(r)=r $ and therefore $ \psi_\star(r) = r $ for all $r>0$. Because  $ A(r,\theta) \ge  r^{n-1} $ for all $ r>0 $ and $ \theta \in \mathbb{S}^{n-1} $, from the definition of $ \psi_\star $ we can finally deduce that in fact $ A(r,\theta) = r^{n-1} $, namely $ \mathbb{M}^n \equiv \mathbb{R}^n $ thanks to Lemma \ref{isometry}. \hfil \qed 

\subsection{Second proof: the Euler-Lagrange equation}\label{elproof}

First of all let us observe that, by classical variational arguments (see e.g.~\cite[Chapter I]{Struwe}), we can assume with no loss of generality that a radial optimal function is nonnegative and satisfies, up to a multiplication by a constant, the Euler-Lagrange equation
\begin{equation}\label{el1}
-\Delta u = -u'' - \mathsf{m}(r,\theta) \, u' = u^{2^\ast-1} \qquad \text{in } \mathbb{M}^n  \, ,
\end{equation}
where the spherical component $ \Delta_{S_r} $ of the Laplace-Beltrami operator in \eqref{lap-belt} has been neglected since $u$ is by assumption radial. Due to elliptic regularity (see again \cite[Appendix B]{Struwe}), we deduce that $u$ is at least $ C^{1,\alpha}(\mathbb{M}^n) $. Thanks to \eqref{lap-m}, note that  \eqref{el1} can be rewritten as 
\begin{equation}\label{el1-bis}
- \frac{1}{A(r,\theta)} \frac{\partial}{\partial r} \! \left( A(r,\theta) \, u' \right) = u^{2^\ast-1} \qquad \text{in } \mathbb{M}^n  \, ,
\end{equation}
which immediately implies that $ u $ is strictly radially decreasing, in particular it is everywhere strictly positive and therefore $C^\infty(\mathbb{M}^n)$ still by elliptic (bootstrap) regularity. Hence, recalling \eqref{lap-euc}, from \eqref{el1} there follows
\begin{equation}\label{el2}
-u'' - \frac{n-1}{r} \, u' \le u^{2^\ast-1} \qquad \forall r>0 \, .
\end{equation}
As in Subsection \ref{radial-opt} we have established that the optimal Sobolev constant $ \mathfrak C $ cannot be smaller than the Euclidean one $ C_{ \mathrm{E}} $, we have: 
\begin{equation}\label{el3}
\begin{aligned}
\left( \int_{0}^\infty \int_{\mathbb{S}^{n-1}}  u(r)^{2^\ast} A(r,\theta) \, d\theta dr \right)^{\frac{2}{2^\ast}} = & \, \mathfrak{C}^2 \int_{0}^\infty \int_{\mathbb{S}^{n-1}} \left|u'(r)\right|^2 A(r,\theta) \, d\theta dr \\
\ge & \, C_{ \mathrm{E}}^2 \int_{0}^\infty \int_{\mathbb{S}^{n-1}} \left|u'(r)\right|^2 A(r,\theta) \, d\theta dr \, .
\end{aligned}
\end{equation}
On the other hand, multiplying \eqref{el1-bis} by $  u A(r,\theta) $ and integrating, we obtain:  
$$
 \int_{0}^\infty \int_{\mathbb{S}^{n-1}} \left|u'(r)\right|^2 A(r,\theta) \, d\theta dr= \int_{0}^\infty \int_{\mathbb{S}^{n-1}} u(r)^{2^\ast} A(r,\theta) \, d\theta dr \, ,
$$
whence, in view of \eqref{el3}, 
\begin{equation}\label{el4}
\left(  \int_{0}^\infty \int_{\mathbb{S}^{n-1}} u(r)^{2^\ast} A(r,\theta) \, d\theta dr \right)^{\frac{2^\ast-2}{2^\ast}} \le \frac{1}{C_{ \mathrm{E}}^2}  \, .
\end{equation}
Since $ A(r,\theta) \ge r^{n-1} $, the radial profile $ u $, now interpreted as a function in $ \mathbb{R}^n $, is also an admissible competitor for the Euclidean Sobolev inequality, i.e.
\begin{equation}\label{el5}
\left( \int_{0}^\infty u(r)^{2^\ast} r^{n-1}  \left|\mathbb{S}^{n-1}\right|  dr \right)^{\frac{2}{2^\ast}}  \le  C_{\mathrm{E}}^2 \int_{0}^\infty \left|u'(r)\right|^2 r^{n-1}  \left|\mathbb{S}^{n-1}\right|  dr \, .
\end{equation}
By exploiting \eqref{el2} as above, we deduce that
\begin{equation}\label{el6}
\int_{0}^\infty  \left|u'(r)\right|^2 r^{n-1} \left|\mathbb{S}^{n-1}\right| dr \le \int_{0}^\infty  u(r)^{2^\ast} r^{n-1} \left|\mathbb{S}^{n-1}\right| dr \, .
\end{equation}
Hence, upon combining \eqref{el5} and \eqref{el6}, we end up with 
\begin{equation}\label{el7}
\frac{1}{C_{ \mathrm{E}}^2} \le \left( \int_0^\infty  u(r)^{2^\ast} r^{n-1} \left|\mathbb{S}^{n-1}\right| dr \right)^{\frac{2^\ast-2}{2^\ast}}  .
\end{equation}
Finally, \eqref{el4} and  \eqref{el7}  yield
$$
 \int_{0}^\infty \int_{\mathbb{S}^{n-1}} u(r)^{2^\ast} \left[ A(r,\theta) - r^{n-1} \right] d\theta dr  \le 0 \, .
$$
Since $ u $ is everywhere strictly positive and $A(r,\theta) \ge r^{n-1} $, this means that actually $ A(r,\theta) = r^{n-1} $, namely $ \mathbb{M}^n $ is isometric to the $ n $-dimensional Euclidean space due to Lemma \ref{isometry}.  \hfill \qed 

\subsection{Third proof: the (radial) isoperimetric inequality}

We borrow the main ideas from the proof \cite[Proposition 8.2]{Hebey}, also taking advantage of the fact that the functions we consider are purely radial. This approach is in some sense the dual of the one carried out in Subsection \ref{euc-weight}, where starting from the optimal function $u$ we constructed a Euclidean function $ \hat{u} $ preserving the $ L^2 $ norm of the gradient and increasing the $ L^{2^\ast} $ norm. Conversely, here we aim at constructing a Euclidean function that has the same $ L^{2^\ast} $ norm but lowers the $ L^2 $ norm of the gradient. To our purpose, let $ \Sigma , \Sigma_{\mathrm{E}} : ( 0,\infty ) \to (0,\infty) $ be defined as follows:
$$
\Sigma(v) := \int_{\mathbb{S}^{n-1}} A(R(v),\theta) \, d\theta  \, , \qquad \Sigma_{\mathrm{E}}(v) := \left| \mathbb{S}^{n-1} \right|^{\frac1n} \left( n v\right)^{\frac{n-1}{n}} \qquad \forall v>0 \, ,
$$
where $ v \mapsto R(v) $ is the inverse function of $ r \mapsto \mu(B_r) $. In other words, recalling formula \eqref{def-meas}, $ \Sigma(v) $ is the surface measure of the geodesic sphere on $ \mathbb{M}^n $ that encloses the geodesic ball of volume $v$, while $ \Sigma_{\mathrm{E}}(v) $ is the surface measure of the Euclidean sphere that encloses the Euclidean ball of volume $v$. It is not difficult to check that $ \Sigma(v) \ge \Sigma_{\mathrm{E}}(v) $ for all $ v>0 $, namely that the radial \emph{Euclidean isoperimetric inequality} holds in $ \mathbb{M}^n $. Indeed, this is equivalent to showing that
\begin{equation}\label{iso-radial}
\psi_\star(r) \ge \varrho(r)  \qquad \forall r>0 \, ,
\end{equation}
where $ \psi_\star $ is defined in \eqref{def-psistar} and $ r \mapsto \varrho(r) $ is the function that to any $r>0$ associates the radius of the Euclidean ball whose volume coincides with $ \mu(B_r) $. Such a function can easily be computed by imposing 
\begin{equation}\label{varrho}
\int _0^r \psi_\star(t)^{n-1} dt = \int_0^{\varrho(r)} t^{n-1}dt  \qquad  \Longrightarrow \qquad  \varrho(r) = \left( n \int _0^r \psi_\star(t)^{n-1} dt  \right)^{\frac1n}  \qquad \forall r>0 \, .
\end{equation}
Hence \eqref{iso-radial} does hold by virtue of the property $ \psi_\star' \ge 1 $ (recall \eqref{psi-prime}): 
$$
\varrho(r) = \left( n \int _0^r \psi_\star(t)^{n-1} dt  \right)^{\frac1n}  \le  \left( n \int _0^r \psi_\star(t)^{n-1} \, \psi_\star'(t) \, dt  \right)^{\frac1n}  = \psi_\star(r) \qquad \forall r > 0 \, .
$$
Now let us consider a nonnegative radial function $ f \equiv f(r) \in C^1(\mathbb{M}^n) $ and its corresponding transformed radial function $ \tilde{f} \equiv \tilde{f}(\varrho) \in C^1(\mathbb{R}^n) $ according to the following implicit relation:
\begin{equation}\label{rearr}
\mathcal{V}(\ell) :=  \mu\!\left( \left\{  f \ge \ell \right\} \right) = \left| \left\{  \tilde f \ge \ell \right\} \right|  \qquad \forall \ell > 0 \, ,
\end{equation}
where $ \left| \cdot \right| $ stands for the Euclidean volume function. Of course \eqref{rearr} does not determine $ \tilde{f} $ in a unique way unless $ \tilde{f} $ is additionally required to be radially decreasing, which gives rise to an analogue of the well-established \emph{Schwarz symmetrization}, originally exploited by Talenti  \cite{Talenti}. Note that, by construction, the functions $ f $ and $ \tilde{f} $ share the same $ L^p $ norms (possibly infinite). Indeed, for any $ p \in[1,\infty) $, by Fubini's theorem and \eqref{rearr} we have: 
\begin{equation}\label{equiv-norm}
\begin{aligned}
\left\| f \right\|_{L^p(\mathbb{M}^n)}^p = \int_{\mathbb{M}^n} f^p \, d\mu = \frac{1}{p} \int_{\mathbb{M}^n} \left( \int_0^f \ell^{p-1} d\ell \right) d\mu 
 = & \frac{1}{p} \int_0^{\infty} \ell^{p-1} \left( \int_{ f \ge \ell } d\mu \right) d\ell \\
 = &  \frac{1}{p} \int_0^{\infty} \ell^{p-1} \, \mathcal{V}(\ell) \, d\ell \\
 = & \left\| \tilde{f} \right\|_{L^p(\mathbb{R}^n)}^p .
\end{aligned}
\end{equation}
Let us deal with gradients (i.e.~radial derivatives). In the sequel, we additionally require that $ f'(r)<0 $ for all $ r>0 $, $ \tilde{f}'(\varrho)<0 $ for all $ \varrho >0 $ and $ \inf f = 0 $, so that in particular $ f $ and $ \tilde{f} $ are strictly radially decreasing (therefore everywhere positive) and vanish at infinity.  Note that, under such assumptions, there holds $ f(r) = \tilde{f}(\varrho(r)) $, where $ \varrho(r) $ is given in \eqref{varrho}. In this case it is easy to check that $ \ell \mapsto \mathcal{V}(\ell) $ is also a $ C^1((0,c)) $ function with $ \mathcal{V}'(\ell)<0 $ for all $ \ell \in (0,c) $, $c>0$ being the maximum of $ f $. Moreover, we have the following identities: 
\begin{equation}\label{gradient}
 f'\!\left( f^{-1}(\ell) \right) = \frac{\Sigma\!\left( \mathcal{V}(\ell) \right)}{\mathcal{V}'(\ell)} \qquad \text{and} \qquad  \tilde{f}'\!\left( \tilde{f}^{-1}(\ell) \right) = \frac{\Sigma_{\mathrm{E}}\!\left( \mathcal{V}(\ell) \right)}{\mathcal{V}'(\ell)}  \qquad \forall \ell \in (0,c) \, .
\end{equation} 
In fact \eqref{gradient} is a simple consequence of the (radial) \emph{co-area} formula 
\begin{equation}\label{coarea}
\int_{\mathbb{M}^n} g \, d\mu = \int_0^{\infty} g(r) \, \psi_\star(r)^{n-1} dr = -	\int_0^{c} \frac{ g\!\left( f^{-1}(\ell) \right) \psi_\star(R(\mathcal{V}(\ell)))^{n-1} } { {f}'\!\left( {f}^{-1}(\ell) \right) } \, d\ell \, ,
\end{equation}
valid for any measurable radial function $ g \ge 0 $, with the particular choice $ g =  \chi_{\{ f \ge z \}} $ for each level $ z \in (0,c) $. Clearly the same holds for $ f \equiv \tilde{f} $ and $ \mathbb{M}^n \equiv \mathbb{R}^n $. We point out that an analogue of \eqref{coarea} is available for a wide class of nonradial functions and general manifolds: see \cite[Section 8.2]{Hebey} and \cite[Chapter III]{Chavel}. However, in our simplified setting it follows directly from the change of variables $ r=f^{-1}(\ell) $ inside the integral. 

At this stage we are in position to conclude the proof. Indeed, if a (nonnegative) radial optimal function $ u \equiv u(r) \in \dot{H}^1(\mathbb{M}^n) $ exists, by virtue of the Euler-Lagrange equation \eqref{el1} we know that it is smooth, positive and satisfies $ u'(r)<0 $ for all $ r>0 $ (see the beginning of the proof in Subsection \ref{elproof}). By choosing $ f=u $ and $ g=\left| \nabla u \right|^2 = |u'|^2 $ in \eqref{coarea}, using \eqref{gradient}, we obtain: 
$$
\left\| \nabla  u \right\|_{L^2(\mathbb{M}^n)}^2 = - \int_0^{c} \frac{\Sigma(\mathcal{V}(\ell))^2}{\mathcal{V}'(\ell)} \, d\ell \ge - \int_0^{c} \frac{\Sigma_{\mathrm{E}}(\mathcal{V}(\ell))^2}{\mathcal{V}'(\ell)} \, d\ell = \left\| \nabla  \tilde u \right\|_{L^2(\mathbb{R}^n)}^2 , 
$$
where in the last passage we have exploited the radial isoperimetric inequality established in the beginning along with \eqref{gradient} and \eqref{coarea} also applied to $ \tilde{f} \equiv \tilde{u} $ and $ \mathbb{M}^n \equiv \mathbb{R}^n $.  On the other hand, if $ u $ is optimal we know that 
$$ 
\left\| \nabla  u \right\|_{L^2(\mathbb{M}^n)} \le \frac{ \left\| u \right\|_{L^{2^\ast}\!(\mathbb{M}^n)}}{C_{\mathrm{E}}} \overset{\eqref{equiv-norm}}{=}  \frac{ \left\| \tilde{u} \right\|_{L^{2^\ast}\!(\mathbb{R}^n)}}{C_{\mathrm{E}}} \le \left\| \nabla \tilde u \right\|_{L^2(\mathbb{R}^n)} .  
$$ 
Hence, by combining the last two formulas we end up with the identity
$$
 -\int_0^{c} \frac{\Sigma(\mathcal{V}(\ell))-\Sigma_{\mathrm{E}}(\mathcal{V}(\ell))}{\mathcal{V}'(\ell)} \, d\ell = 0 \, ,
$$
which yields $ \Sigma(\mathcal{V}(\ell)) = \Sigma_{\mathrm{E}}(\mathcal{V}(\ell)) $ for every $ \ell \in (0,c) $ since $ \Sigma \ge \Sigma_{\mathrm{E}} $, and it is readily seen that this implies $ \psi_\star(r)=r $ for all $ r>0 $, which proves the thesis in view of Lemma \ref{isometry}. \hfil \qed 

\begin{remark}\rm	\label{rrr}
We stress that each of the proofs of our main result, Theorem \ref{teo-nonex}, is completely independent of the knowledge of the optimal constant in \eqref{eq-sob-euc}, hence of the validity of the Cartan-Hadamard conjecture, which as mentioned above is a very delicate problem that has only recently been solved by \cite{GS}. However, it is reasonable that, in the light of the latter, a careful analysis of the proof of \cite[Proposition 8.2]{Hebey} yields the analogue of Theorem \ref{teo-nonex} even for \emph{nonradial} optimal functions.
\end{remark}

\medskip 

\noindent{\textbf{Acknowledgments.}} 
M.M. thanks the ``Gruppo Nazionale per l'Analisi Matematica, la Probabilit\`a e le loro Applicazioni'' (GNAMPA) of the ``Istituto Nazionale di Alta Matematica'' (INdAM, Italy). M.M.~was supported  by the GNAMPA Project 2018 ``Analytic and Geometric Problems Associated to Nonlinear Elliptic and Parabolic PDEs'', by the GNAMPA  Project 2019 ``Existence and Qualitative Properties for Solutions of Nonlinear Elliptic and Parabolic PDEs'' and by the PRIN Project 2017 ``Direct and Inverse Problems for Partial Differential Equations: Theoretical Aspects and Applications'' (MIUR, Italy). T.K.~was supported by JSPS KAKENHI Grant Numbers JP 19H05599 and 16K17629 (Japan).

M.M. is grateful to the Ryukoku University for the hospitality during his visit in April 2019, when part of this project was carried out.


\begin{thebibliography}{20} 
	
	
	


\bibitem{Aubin1} T. Aubin, \emph{\'Equations diff\'erentielles non lin\'eaires et probl\`eme de Yamabe concernant la courbure scalaire} (French), J. Math. Pures Appl. \textbf{55} (1976), 269--296.

\bibitem{Aubin3} T. Aubin, \emph{Espaces de Sobolev sur les vari\'et\'es Riemanniennes} (French), Bull. Sci. Math. \textbf{100} (1976), 149--173.

\bibitem{Aubin2} T. Aubin, \emph{Probl\`emes isop\'erim\'etriques et espaces de Sobolev} (French), J. Differential Geometry \textbf{11} (1976), 573--598.

\bibitem{Aubin Li} T. Aubin and Y.Y. Li, \emph{On the best Sobolev inequality}, J. Math. Pures Appl. \textbf{78} (1999), 353--387.



\bibitem{Bac} M. Ba\v{c}\'ak, ``Convex Analysis and Optimization in Hadamard Spaces''. De Gruyter Series in Nonlinear Analysis and Applications, 22. De Gruyter, Berlin, 2014.






\bibitem{BGG} E. Berchio, D. Ganguly and G. Grillo, \emph{Sharp Poincar\'e-Hardy and Poincar\'e-Rellich inequalities on the hyperbolic space}, J. Funct. Anal. \textbf{272} (2017), 1661--1703. 



\bibitem{BGV08} M. Bonforte, G. Grillo and J.L. V\'azquez, \emph{Fast diffusion flow on manifolds of nonpositive curvature}, J. Evol. Equ. \textbf{8} (2008), 99--128.


\bibitem{Car1} G. Carron, \emph{In\'egalit\'es de Hardy sur les vari\'et\'es riemanniennes non-compactes} (French), J. Math. Pures Appl. \textbf{76} (1997), 883--891. 

\bibitem{Car2} G. Carron, \emph{In\'egalit\'e de Sobolev et volume asymptotique} (French), Ann. Fac. Sci. Toulouse Math. \textbf{21} (2012), 151--172.

\bibitem{Chavel} I. Chavel, ``Riemannian Geometry. A Modern Introduction''. Cambridge Studies in Advanced Mathematics, 98. Cambridge University Press, Cambridge, 2006. 

\bibitem{Dav} E.B. Davies, ``Heat Kernels and Spectral Theory''. Cambridge Tracts in Mathematics, 92. Cambridge University Press, Cambridge, 1989.

\bibitem{DEL} J. Dolbeault, M.J. Esteban and M. Loss, \emph{Rigidity versus symmetry breaking via nonlinear flows on cylinders and Euclidean spaces}, Invent. Math. \textbf{206} (2016), 397--440.

\bibitem{DELM} J. Dolbeault, M.J. Esteban, M. Loss and M. Muratori, \emph{Symmetry for extremal functions in subcritical Caffarelli-Kohn-Nirenberg inequalities}, C. R. Math. Acad. Sci. Paris \textbf{355} (2017), 133--154.

\bibitem{DMN} J. Dolbeault, M. Muratori and B. Nazaret, \emph{Weighted interpolation inequalities: a perturbation approach}, Math. Ann. \textbf{369} (2017), 1237--1270.



\bibitem{GS} M. Ghomi and J. Spruck, \emph{Total curvature and the isoperimetric inequality in Cartan-Hadamard manifolds}, preprint arXiv: \url{https://arxiv.org/abs/1908.09814}. 

\bibitem{GreeneWu} R.E. Greene and H. Wu, ``Function Theory on Manifolds which Possess a Pole''. Lecture Notes in Mathematics, 699. Springer, Berlin, 1979.

\bibitem{Grigor'yan} A. Grigor'yan, \emph{Analytic and geometric background of recurrence and non-explosion of the Brownian motion on Riemannian manifolds}, Bull. Amer. Math. Soc. \textbf{36} (1999), 135--249.

\bibitem{Grig09} A. Grigor'yan, ``Heat Kernel and Analysis on Manifolds''. AMS/IP Studies in Advanced Mathematics, 47. American Mathematical Society, Providence, RI; International Press, Boston, MA, 2009.

\bibitem{GSC} A. Grigor'yan and L. Saloff-Coste, \emph{Surgery of the Faber-Krahn inequality and applications to heat kernel bounds}, Nonlinear Anal. \textbf{131} (2016), 243--272.

%

\bibitem{GM16} G. Grillo and M. Muratori, \emph{Smoothing effects for the porous medium equation on Cartan-Hadamard manifolds}, Nonlinear Anal. \textbf{131} (2016), 346--362.

%


\bibitem{GMV} G. Grillo, M. Muratori and J.L. V\'azquez, \emph{The porous medium equation on Riemannian manifolds with negative curvature. The large-time behaviour}, Adv. Math. \textbf{314} (2017), 328--377. 

\bibitem{Hebey ter} E. Hebey, \emph{Optimal Sobolev inequalities on complete Riemannian manifolds with Ricci curvature bounded below and positive injectivity radius}, Amer. J. Math. \textbf{118} (1996), 291--300.


\bibitem{Hebey} E. Hebey, ``Nonlinear Analysis on Manifolds: Sobolev Spaces and Inequalities''. Courant Lecture Notes in Mathematics, 5. New York University, Courant Institute of Mathematical Sciences, New York; American Mathematical Society, Providence, RI, 1999.  

\bibitem{HV} E. Hebey and M. Vaugon, \emph{The best constant problem in the Sobolev embedding theorem for complete Riemannian manifolds}, Duke Math. J. \textbf{79} (1995), 235--279.


\bibitem{L99} M. Ledoux, \emph{On manifolds with non-negative Ricci curvature and Sobolev inequalities}, Comm. Anal. Geom. \textbf{7} (1999), 347--353. 
 



\bibitem{Lee} J.M. Lee, ``Riemannian Manifolds. An Introduction to Curvature''. Graduate Texts in Mathematics, 176. Springer-Verlag, New York, 1997.

\bibitem{LZ} Z. Lu and D. Zhou, \emph{On the essential spectrum of complete non-compact manifolds}, J. Funct. Anal. \textbf{260} (2011), 3283--3298.


\bibitem{MRS} P. Mastrolia, M. Rigoli and A.G. Setti, ``Yamabe-type Equations on Complete, Noncompact Manifolds''. Progress in Mathematics, 302. Birkh\"auser/Springer Basel AG, Basel, 2012.

\bibitem{McKean} H.P. McKean, \emph{An upper bound to the spectrum of $\Delta$ on a manifold of negative curvature}, J. Differential Geometry \textbf{4} (1970), 359--366.

\bibitem{MR} M. Muratori and A. Roncoroni, \emph{Sobolev-type inequalities on Cartan-Hadamard manifolds under curvature bounds}, preprint arXiv: \url{https://arxiv.org/abs/1805.02726}. 


\bibitem{Nguyen} V.H. Nguyen, \emph{The sharp Poincar\'e-Sobolev type inequalities in the hyperbolic space $\mathbb{H}^n$}, J. Math. Anal. Appl. \textbf{462} (2018), 1570--1584. 


\bibitem{Struwe} M. Struwe, ``Variational Methods. Applications to Nonlinear Partial Differential Equations and Hamiltonian Systems''. Springer-Verlag, Berlin, 1990.

\bibitem{Talenti} G. Talenti, \emph{Best constant in Sobolev inequality}, Ann. Mat. Pura Appl. \textbf{110} (1976), 353--372.

%

\bibitem{VazHyp} J.L. V\'azquez, \emph{Fundamental solution and long time behavior of the porous medium equation in hyperbolic space}, J. Math. Pures Appl. \textbf{104} (2015), 454--484.



\end{thebibliography}
\end{document}